\newcommand{\abs}[1]{\left\vert #1 \right\vert}
\renewcommand{\bar}[1]{\overline{#1}}
\DeclareMathOperator{\codim}{codim}
\renewcommand{\epsilon}{\varepsilon}
\DeclareMathOperator{\Ext}{Ext}
\newcommand{\iso}{\cong}
\DeclareMathOperator{\Hom}{Hom}
\DeclareMathOperator{\hgt}{ht}				
\newcommand{\kk}{k}					
\newcommand{\NN}{\mathbb{N}}
\DeclareMathOperator{\Pf}{Pf}				
\renewcommand{\phi}{\varphi}
\DeclareMathOperator{\pd}{pd}				
\newcommand{\QQ}{\mathbb{Q}}
\DeclareMathOperator{\reg}{reg}			
\DeclareMathOperator{\syz}{Syz}			
\DeclareMathOperator{\Tor}{Tor}
\newcommand{\ZZ}{\mathbb{Z}}
\newtheorem{thm}{Theorem}[section]
\newtheorem*{thm*}{Theorem}
\newtheorem{prop}[thm]{Proposition}
\newtheorem{cor}[thm]{Corollary}
\newtheorem*{main-thm}{Main Theorem}
\theoremstyle{definition}
\newtheorem{example}[thm]{Example}
\newtheorem{rmk}[thm]{Remark}
\newtheorem{question}[thm]{Question}
\newtheorem*{notation}{Notation}
\newtheorem*{ack}{Acknowledgements}
\numberwithin{equation}{section}
\numberwithin{table}{section}
\begin{document}

\title{Quadratic Gorenstein rings and the Koszul property II}
\date{}

\author{Matthew Mastroeni}
\address{Department of Mathematics, Oklahoma State University, Stillwater, OK 74078}
\email{mmastro@okstate.edu}

\author{Hal Schenck}
\thanks{Schenck supported by NSF 1818646.}
\address{Department of Mathematics, Iowa State University, Ames, IA 50011}
\email{hschenck@iastate.edu}

\author{Mike Stillman}
\thanks{Stillman supported by NSF 1502294.}
\address{Department of Mathematics, Cornell University, Ithaca, NY 14850}
\email{mike@math.cornell.edu}

\subjclass[2000]{Primary 13D02; Secondary 14H45, 14H50} 
\keywords{Syzygy, Koszul algebra, Gorenstein algebra.}

\maketitle

\begin{abstract}
In \cite{Grobner:flags:and:Gorenstein:algebras}, Conca-Rossi-Valla ask if every quadratic Gorenstein ring $R$ of regularity three is Koszul. In \cite{QGRKP1} we use idealization to answer their question, proving that in nine or more variables there exist quadratic Gorenstein rings of regularity three which are not Koszul. In this paper, we study the analog of the Conca-Rossi-Valla question when the regularity of $R$ is four or more. Let $R$ be a quadratic Gorenstein ring having $\codim R = c$ and $\reg R = r \ge 4$. We prove that if $c = r+1$ then $R$ is always Koszul, and for every $c \geq r+2$, we construct quadratic Gorenstein rings that are not Koszul, answering questions of Matsuda \cite{Matsuda} and Migliore-Nagel \cite{Gorenstein:algebras:presented:by:quadrics}.
\end{abstract}

\begin{spacing}{1.15}
\section{Introduction}\label{intro}

Let $I$ be a homogeneous ideal generated by quadrics in a standard graded polynomial ring $S$ over a field, and set $R=S/I$. In \cite{Tate}, Tate showed that if $I$ is a complete intersection, then $R$ is Koszul. Complete intersections are the simplest examples of Gorenstein rings, and Conca-Rossi-Valla show in \cite{Grobner:flags:and:Gorenstein:algebras} that quadratic Gorenstein rings of regularity two are always Koszul. On the other hand, quadratic Gorenstein rings of regularity three are less well understood:
\begin{itemize}
\item Vishik-Finkelberg \cite{Vishik:Finkelberg} and Polishchuk \cite{Polishchuk} show that the homogeneous
  coordinate ring of a canonical curve generated by quadrics is Koszul.
\item Conca-Rossi-Valla  \cite{Grobner:flags:and:Gorenstein:algebras} show that if $R$ has codimension at most four
  it is Koszul.
\item Caviglia \cite{Caviglia} shows that if $R$ has codimension five it is Koszul.
\end{itemize}
Conca-Rossi-Valla ask in \cite{Grobner:flags:and:Gorenstein:algebras} if it is possible that all quadratic Gorenstein rings of regularity three are Koszul.  In \cite{QGRKP1} we negatively answer this question; using Nagata's technique of idealization, we produce quadratic Gorenstein rings of regularity three which are not Koszul for all codimensions $c \geq  9$.

Our work was motivated by Matsuda's discovery in \cite{Matsuda} of a quadratic Gorenstein ring with regularity four and codimension seven which is not Koszul. Matsuda constructs his example via graph theory; his methods do not produce further examples of quadratic Gorenstein rings which are not Koszul. Matsuda's example is not subsumed by the results of \cite{QGRKP1}, as it does not arise as an idealization (however, see Remark \ref{generalizing:Matsuda}). 

Tensoring a quadratic Gorenstein ring $R$ with appropriate choices of $R'$ yields new quadratic Gorenstein rings $R\otimes_kR'$. Applying this to non-Koszul quadratic Gorenstein rings $R$ constructed by idealization and by Matsuda, we show in \cite[4.9]{QGRKP1} that quadratic Gorenstein rings and the Koszul property are related to each other in characteristic zero\footnote{The characteristic assumption is needed only for a small number of examples verified using {\tt Macaulay2} \cite{Macaulay2}.}  by the table appearing in Figure \ref{table:of:when:quadratic:Gorenstein:rings:are:Koszul}. Our goal in this paper is to address the lacunae in the table.

\begin{figure}[h]
\hspace{3 em}
\begin{tikzpicture}[scale = 0.56]
\draw[gray, very thin] (0, 0.1) grid (12.9, 13);
\draw[->] (-1,13) -- (13,13);
\draw[->] (0, 14) -- (0, 0); 
\node[right] (c) at (13, 13) {$c$};
\node[below] (r) at (0, 0) {$r$};

\foreach \c in {0,1,2,3,4,5,6,7,8,9,10,11} {
	\node[above] (\c) at (\c+1, 13) {\tiny \c};
	\node[left] (\c +12) at (0, 12-\c) {\tiny \c};
	};
	
\foreach \c/\r in {0/0,1/1,2/2,3/3,4/4,5/5,6/6,7/7,8/8,9/9,10/10,11/11,3/2,4/2,5/2,6/2,7/2,8/2,9/2,10/2,11/2,4/3,5/3} {
	\draw[thick, fill = black!75] (\c + 1, 12-\r) circle (0.25);
	};
	
\foreach \c/\r in {6/3,7/3,8/3,6/4,8/4,9/4,7/5, 8/6,9/7,10/8,11/9, 9/5,10/5,5/4,6/5,7/6,8/7,9/8,10/9,11/10} {
	\fill[black!75] (\c+1, 11.75 - \r) -- (\c+1, 12.25 - \r) arc (90:270:0.25) -- cycle;
	\draw[thick] (\c + 1, 12 - \r) circle (0.25);
	};
	
\foreach \c/\r in {9/3,10/3,11/3,10/4,11/4,11/5,7/4,8/5,9/6,10/7,11/8,10/6,11/7,11/6} {
	\draw[thick] (\c + 1, 12-\r) circle (0.25);
	};
	
\draw[thick, fill = black!75] (15, 8) circle (0.25);
\node[right] (y) at (15.5, 8) {\small Yes};
\draw[thick] (15, 7) circle (0.25);
\node[right] (n) at (15.5, 7) {\small No};
\fill[black!75] (15, 5.75) -- (15, 6.25) arc (90:270:0.25) -- cycle;
\draw[thick] (15, 6) circle (0.25);
\node[right] (m) at (15.5, 6) {\small Unknown};
\end{tikzpicture}
\caption{\small{Is every quadratic Gorenstein ring of codimension $c$ and regularity $r$ Koszul?}}
\label{table:of:when:quadratic:Gorenstein:rings:are:Koszul}
\end{figure}
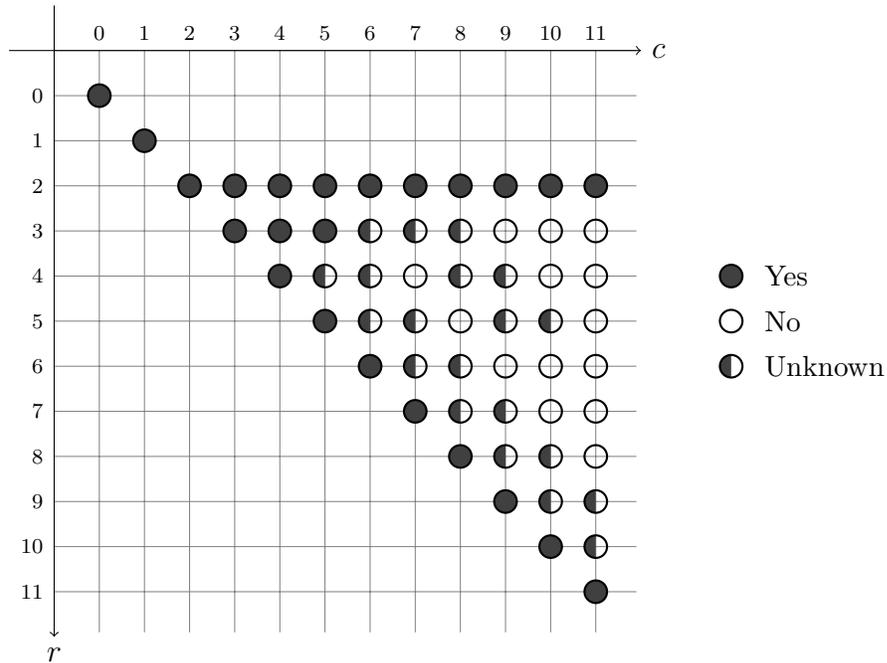

\begin{notation} 
Throughout this paper, unless specifically stated otherwise, we will denote by $\kk$ a fixed ground field of arbitrary characteristic, by $S$ a standard graded polynomial ring over $\kk$, by $I \subseteq S$ a proper graded ideal, and $R = S/I$. Recall that $I$ is \emph{nondegenerate}\index{nondegenerate} if it contains no linear forms; for arbitrary $I$ we can killing a basis for the linear forms contained in $I$ to reduce to the nondegenerate setting, and we will assume that this is the case throughout.  We set $c = \codim R = \hgt I$ and $r = \reg R$.
\end{notation}

\subsection{Key players}
In this paper, we study the relationship between two conditions that impose extraordinary constraints on the homological properties of $R$, namely the Gorenstein and Koszul properties.  The ring $R$ is \emph{Gorenstein} if it is Cohen-Macaulay and its canonical module is isomorphic to a shift of $R$:
\[
\omega_R = \Ext_S^c(R,S)(-n) \iso R(a)
\]
where $\dim S = n$. This implies that the \emph{graded Betti numbers}
\[
\beta_{i,j}^S(R) = \dim_{\kk} \Tor_i^S(R,\kk)_j
\]
have a symmetry 
\begin{equation} \label{Gorenstein:Betti:table:symmetry}
\beta_{i,j}^S(R) = \beta^S_{c-i,c+r-j}(R)
\end{equation}
for all $i, j$.  This information is compactly summarized in the \emph{Betti table} of $R$, where the entry in column $i$ and row $j$ is $\beta_{i,i+j}^S(R)$ (see below for an example). We also recall that the \emph{regularity} of $R$ is 
\[
\reg R = \max\{j \mid \beta_{i,i+j}^S(R) \neq 0 \; \text{for some}\; i \}
\]
It is the index of the bottom-most nonzero row in the Betti table of $R$. When $R$ is Cohen-Macaulay, the regularity is also the length of the \emph{$h$-vector} $h(R) = (1, c, h_2, \dots, h_r)$, which is related to the Hilbert series of $R$ by 
\[ H_R(t) = \frac{1 + ct + h_2t^2 + \cdots + h_rt^r}{(1-t)^{\dim R}} \]
It is well-known that the $h$-vector of a Gorenstein ring is also symmetric in the sense that $h_i = h_{r-i}$ for all $i$, and the regularity $r$ is related to the shift $a$ in the canonical module via $r=a+n-c$.

On the other hand, $R$ is \emph{Koszul} if the ground field $R/R_+ \iso \kk$ has a linear free resolution over $R$.  That is, we have $\beta^R_{i,j}(\kk) = 0$ for all $i$ and $j$ with $j \neq i$.  Koszul algebras have strong duality properties and appear as many rings of interest in commutative algebra,  topology, and algebraic geometry; see the surveys \cite{Froberg:Koszul:algebras:survey} and \cite{Koszul:algebras:and:their:syzygies} and the references therein. A necessary condition for $R$ to be Koszul is that the ideal $I$ is generated by quadrics; however, this is not sufficient.

\begin{example}[{\cite[1.3]{Matsuda}}]  \label{Matsuda's:example}
Matsuda constructs a toric sevenfold in $\mathbb{P}^{14}$ whose coordinate ring $R$ is quadratic and Gorenstein but not Koszul.  Its Betti table appears below. 
\begin{center}
\scalebox{0.835}{
\begin{tabular}{r|cccccccc}
 & 0 & 1 & 2 & 3 & 4 & 5 & 6 & 7  \\
\hline  
0 & 1 & -- & -- & -- & -- & -- & -- & -- 
\\ 
1 & -- & 14 & 21 & -- & -- & -- & -- & --
\\ 
2 & -- & -- & 36 & 126 & 126 & 36 & -- & --
\\ 
3 & -- & -- & -- & -- & -- & 21 & 14 & -- 
\\ 
4 & -- & -- & -- & -- & -- & -- & -- & 1 
\end{tabular}
}
\end{center} 
A {\tt Macaulay2} computation shows that $\beta_{3,4}^R(\kk) = 1$, so $R$ is not Koszul.  Matsuda builds the ideal from a certain graph, but it does not
generalize in any obvious fashion. The $h$-vector of $R$ is $(1, 7, 14, 7, 1)$.
\end{example}

\begin{question}[{\cite[2.1]{Matsuda}}] \label{beating:Matsuda:in:regularity:4}
Do there exist non-Koszul quadratic Gorenstein rings with $r = 4$ and $c \leq 6$?
\end{question}

The $h$-vector of a quadratic Gorenstein ring with $r = 4$ and $c \leq 6$ which is not a complete intersection necessarily has the form $(1, c, h_2, c, 1)$, where $c = 5, 6$.  In \cite{Gorenstein:algebras:presented:by:quadrics}, Migliore-Nagel show that the only possible $h$-vectors are 
\[
\begin{array}{cccc}
  (1, 5, 8, 5, 1) & (1, 6, 10, 6, 1) & (1, 6, 11, 6, 1) & (1, 6, 12, 6, 1)
\end{array}
\]
but they were unable to construct an example with $h_2 = 12$ in characteristic zero and ask:

\begin{question} \label{Migliore-Nagel:h-vector:exists}
Do there exist quadratic Gorenstein rings of characteristic zero with $h$-vector $(1, 6, 12, 6, 1)$?
\end{question}

\subsection{Results} Theorem~\ref{c=r+1:is:Koszul} shows that any quadratic Gorenstein ring $R$ with $c = r + 1$ is always Koszul.  In particular, any quadratic Gorenstein with $h$-vector $(1,5,8,5,1)$ is always Koszul. However, in Example~\ref{r=4:c=6}, we construct a non-Koszul quadratic Gorenstein ring over $\QQ$ with Hilbert function $(1,6,12,6,1)$, affirmatively answering Questions \ref{beating:Matsuda:in:regularity:4} and \ref{Migliore-Nagel:h-vector:exists}. Our results are
summarized in the improvement of Figure \ref{table:of:when:quadratic:Gorenstein:rings:are:Koszul} below.

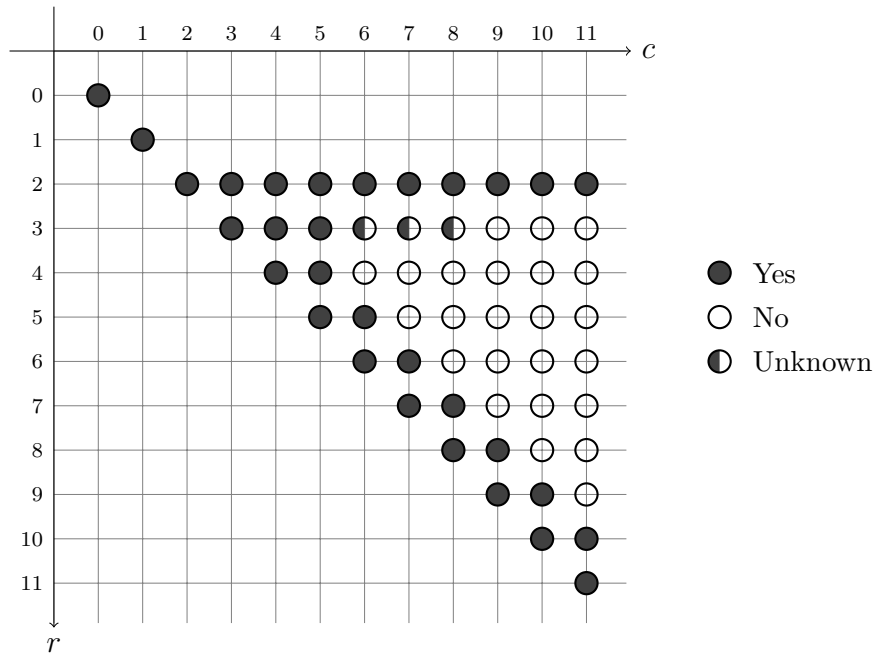
\begin{figure}[H]
\hspace{3 em}
\begin{tikzpicture}[scale = 0.55]
\draw[gray, very thin] (0, 0.1) grid (12.9, 13);
\draw[->] (-1,13) -- (13,13);
\draw[->] (0, 14) -- (0, 0); 
\node[right] (c) at (13, 13) {$c$};
\node[below] (r) at (0, 0) {$r$};

\foreach \c in {0,1,2,3,4,5,6,7,8,9,10,11} {
	\node[above] (\c) at (\c+1, 13) {\tiny \c};
	\node[left] (\c +12) at (0, 12-\c) {\tiny \c};
	};
	
\foreach \c/\r in
{0/0,1/1,2/2,3/3,4/4,5/5,6/6,7/7,8/8,9/9,10/10,11/11,3/2,4/2,5/2,6/2,7/2,8/2,9/2,10/2,11/2,4/3,5/3,5/4,6/5,7/6,8/7,9/8,10/9,11/10} {
	\draw[thick, fill = black!75] (\c + 1, 12-\r) circle (0.25);
	};
	
\foreach \c/\r in {6/3,7/3,8/3}{
	\fill[black!75] (\c+1, 11.75 - \r) -- (\c+1, 12.25 - \r) arc (90:270:0.25) -- cycle;
	\draw[thick] (\c + 1, 12 - \r) circle (0.25);

	};
	
\foreach \c/\r in {9/3,10/3,11/3,10/4,11/4,11/5,7/4,8/5,9/6,10/7,11/8,10/6,11/7,11/6,6/4,8/4,9/4,7/5, 8/6,9/7,10/8,11/9, 9/5,10/5} {
	\draw[thick] (\c + 1, 12-\r) circle (0.25);
	};
	
\draw[thick, fill = black!75] (15, 8) circle (0.25);
\node[right] (y) at (15.5, 8) {\small Yes};
\draw[thick] (15, 7) circle (0.25);
\node[right] (n) at (15.5, 7) {\small No};
\fill[black!75] (15, 5.75) -- (15, 6.25) arc (90:270:0.25) -- cycle;
\draw[thick] (15, 6) circle (0.25);
\node[right] (m) at (15.5, 6) {\small Unknown};
\end{tikzpicture}
\caption{\small{Is every quadratic Gorenstein ring of codimension $c$ and regularity $r$ Koszul?}}
\label{improved:table:of:when:quadratic:Gorenstein:rings:are:Koszul}
\end{figure}

The division of the rest of the paper is as follows.  We start with a review of the relevant properties of quadratic Gorenstein rings in \S \ref{background}. In \S \ref{deviation:2:case}, we prove: 

\begin{thm*} 
If $R = S/I$ is a quadratic Gorenstein ring over an infinite ground field with $c = r + 1$, then 
\[I = \Pf(M) + (q_6, \dots, q_{c+2})\] 
for some $5 \times 5$ alternating matrix $M$ of linear forms such that $\hgt \Pf(M) = 3$ and quadrics $q_i$ which form a regular sequence modulo $\Pf(M)$.
\end{thm*}

As a consequence, any quadratic Gorenstein ring $R$ with $c = r + 1$ is a Koszul algebra.  In \S \ref{r>=4}, we use inverse systems to prove:

\begin{thm*}
There is a family of quadratic Gorenstein non-Koszul algebras of regularity four with Hilbert function
$(1,c,2c,c,1)$ for every codimension $c \ge 7$.
\end{thm*}

We close by constructing a regularity-four quadratic Gorenstein non-Koszul algebra over $\QQ$ with Hilbert function $(1,6,12,6,1)$ related to the family of the previous theorem. This answers the questions of \cite{Matsuda} and \cite{Gorenstein:algebras:presented:by:quadrics}.

\section{Background on Quadratic Gorenstein Rings} \label{background}

The mains tools that we will use in the following sections are linkage and inverse systems.  We briefly review some terminology and establish our conventions regarding these topics below, and we refer the reader to \cite[Ch.~21]{Eisenbud},\cite[Ch.~4]{blowup:algebras}, and \cite[\S 2.4]{Lefschetz:properties} for further details.

Given an ideal $I \subseteq S$ with $\hgt I = c$, we say that $I$ is \emph{directly linked} to the ideal $J$ if there is a complete intersection $L \subseteq I$ with $\hgt L = c$ such that $J = (L : I)$ and $I = (L : J)$.  When $I$ is unmixed, in particular when $I$ is Cohen-Macaulay, it is automatically directly linked to the colon ideal $J = (L : I)$ for any complete intersection $L$ as above.  Many properties can be passed between an ideal and its links.  For example, if $I$ and $J$ are directly linked, then $S/I$ is Cohen-Macaulay if and only if $S/J$ is.  We will especially need the following proposition relating the $h$-vectors of a Cohen-Macaulay ideal and its links, which was originally proved in greater generality in \cite[3(b)]{Gorenstein:algebras:and:Cayley-Bacharach}; see also \cite[5.2.19]{Migliore:linkage:book}.

\begin{prop} \label{h-vectors:and:linkage}
Suppose that $I \subseteq S$ is a Cohen-Macaulay ideal generated by quadrics and that $L \subseteq I$ is a quadratic complete intersection with $\hgt L = \hgt I = c$.  Denote the linked ideal by $J = (L: I)$.  Then $h$-vectors of $S/I$ and $S/J$ are related by
\[ h_i(S/I) = \binom{c}{i} - h_{c-i}(S/J) \]
\end{prop}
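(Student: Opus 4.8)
The plan is to pass between the $h$-vectors via the Hilbert series, using the fact that linkage by a complete intersection is governed by a mapping-cone duality on free resolutions. Since $I$ and $L$ are both quadratic, write $L = (\ell_1, \dots, \ell_c)$ with each $\ell_i$ a quadric forming a regular sequence of height $c$. The Koszul complex $K_\bullet$ on $\ell_1, \dots, \ell_c$ resolves $S/L$, and because $S/L$ is Gorenstein of codimension $c$ with socle degree $2c - c = c$ (each generator contributes degree $2$, so $a$-invariant considerations give the canonical shift), its Hilbert series is
\[
H_{S/L}(t) = \frac{(1 + t)^c}{(1 - t)^{\dim S/L}}.
\]
Hence the $h$-vector of $S/L$ is $h_i(S/L) = \binom{c}{i}$. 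The first step is therefore to record this computation carefully.

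Next I would exploit the defining exact sequences of linkage. Since $I$ is Cohen-Macaulay (hence unmixed) and $L \subseteq I$ is a complete intersection of the same height, we have $J = (L : I)$, $I = (L : J)$, and there is a short exact sequence
\[
0 \to J/L \to S/L \to S/I \to 0,
\]
together with the duality identification $J/L \cong \omega_{S/I}(-t_0)$ for an appropriate twist; concretely, $\Hom_{S/L}(S/I, S/L) \cong J/L$, and applying $\Hom_{S/L}(-, \omega_{S/L})$ with $\omega_{S/L} = (S/L)(-c)$ (using $\dim S = n$, the shift works out so that $\omega_{S/L}$ is $S/L$ shifted by the socle degree $c$) realizes $J/L$ as the canonical module of $S/I$ up to the shift by $c$. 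The key numerical consequence is
\[
H_{J/L}(t) = H_{S/L}(t) - H_{S/I}(t), \qquad H_{J/L}(t) = (-1)^{\dim S/I}\, t^{\,c}\, H_{S/I}(1/t),
\]
the second equality being the standard expression of $H_{\omega_M}$ in terms of $H_M$ for a Cohen-Macaulay module, combined with the shift by $c$. I would then clear denominators: writing $H_{S/I}(t) = \frac{h_{S/I}(t)}{(1-t)^d}$ and $H_{S/L}(t) = \frac{(1+t)^c}{(1-t)^d}$ with $d = \dim S/I = \dim S/L$, the two displayed identities become the single polynomial identity
\[
h_{S/J}(t) = (1 + t)^c - t^{\,c}\, h_{S/I}(1/t),
\]
after matching $h_{S/J}$ against the numerator of $H_{S/J}$ via the analogous functional equation applied to $S/J$. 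Comparing coefficients of $t^i$ on both sides — using that $t^c h_{S/I}(1/t) = \sum_i h_{c-i}(S/I)\, t^{i}$ since $h_{S/I}$ has degree at most $r \le c$, padding with zeros — yields exactly $h_i(S/J) = \binom{c}{i} - h_{c-i}(S/I)$, which is equivalent to the claimed formula after swapping the roles of $I$ and $J$ (legitimate since linkage is symmetric and $J$ is also Cohen-Macaulay generated in the appropriate degrees).

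The main obstacle I anticipate is pinning down the twists precisely: getting the socle degree of $S/L$ to be exactly $c$ (this is where quadraticity of $L$ enters, and one must be careful that $a(S/L) = c - d$ translates to the numerator functional equation $h_{S/L}(t) = t^c h_{S/L}(1/t)$), and then tracking how this shift propagates through the isomorphism $J/L \cong \omega_{S/I}$ to produce the exponent $c$ rather than $c - 1$ or $r$ in $t^c h_{S/I}(1/t)$. Everything else is bookkeeping with Hilbert series, but a sign error or off-by-one in the shift would give the wrong binomial, so I would double-check the small case $I = L$ (where $J = L$ and the formula reads $h_i(S/L) = \binom{c}{i} - h_{c-i}(S/L)$, forcing $h_i(S/L) = \binom{c}{i}$, consistent with the Koszul complex) and one genuinely linked example as a sanity check. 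Alternatively, one can cite \cite[5.2.19]{Migliore:linkage:book} directly, but the Hilbert-series argument above is self-contained.
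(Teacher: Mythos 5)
Your overall route is the right one: the paper itself does not prove this proposition (it cites \cite[3(b)]{Gorenstein:algebras:and:Cayley-Bacharach} and \cite[5.2.19]{Migliore:linkage:book}), and the Hilbert-series/linkage-duality computation you sketch is exactly the standard argument behind those citations. Your final polynomial identity $h_{S/J}(t) = (1+t)^c - t^{c}h_{S/I}(1/t)$ is correct and yields the proposition simply by comparing coefficients and replacing $i$ by $c-i$; no ``swap of the roles of $I$ and $J$'' is needed, which is fortunate, because your justification for such a swap (that $J$ is ``generated in the appropriate degrees'') is not available in general --- links of quadratic ideals need not be quadratic, as the proof of Theorem \ref{c=r+1:is:Koszul} itself shows with $J = (L,z)$.

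However, the two displayed identities from which you claim to derive this are wrong as written, and taken literally they would sink the proof. The cokernel of $J/L \hookrightarrow S/L$ is $S/J$, not $S/I$; the two internally consistent pairings are $0 \to J/L \to S/L \to S/J \to 0$ together with $J/L \cong \omega_{S/I}(n-2c)$, or $0 \to I/L \to S/L \to S/I \to 0$ together with $I/L \cong \omega_{S/J}(n-2c)$, and you have taken the exact sequence from one pairing and the duality from the other. Combining your two displays as stated gives a relation of $h(S/I)$ with itself, essentially $h_i(S/I) = \binom{c}{i} - h_{c-i}(S/I)$, which is false in general (it already fails for $I = L$) and from which the proposition cannot be extracted; you reach the correct identity only by tacitly replacing $S/I$ by $S/J$ on one side at the end. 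The twist in your functional equation is also off by a factor of $t^{\dim S/I}$: with $d = \dim S/I = n - c$ the correct statement is $H_{J/L}(t) = t^{\,c-d}(-1)^{d}H_{S/I}(1/t)$, i.e.\ $H_{J/L}(t) = t^{c}h_{S/I}(1/t)/(1-t)^{d}$, not $t^{c}$ times the full series. Finally, your sanity check is itself erroneous: if $I = L$ then $J = (L:L) = S$, not $L$, and the formula correctly returns $h_i(S/L) = \binom{c}{i}$ because $h(S/J) = 0$; with $J = L$ your check combined with the Gorenstein symmetry of $h(S/L)$ would force $h_i(S/L) = \tfrac{1}{2}\binom{c}{i}$, a contradiction rather than a confirmation. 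Once the pairing of exact sequence with dual module is straightened out, the twist $\omega_{S/L} \cong (S/L)(2c-n)$ is used correctly, and one records that $S/J$ is Cohen--Macaulay of dimension $d$ (so the numerator over $(1-t)^{d}$, of degree at most $c$, really is its $h$-polynomial), your argument does go through and recovers the cited result.
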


%

Suppose that $S = \kk[x_0, \dots, x_{c-1}]$ so that $R = S/I$ is an Artinian ring.  We consider the ring $D = \kk[y_0,\dots, y_{c-1}]$ as an $S$-module via the action defined as follows.  For any monomials $x^\alpha = x_0^{\alpha_0} \cdots x_{c-1}^{\alpha_{c-1}}$ and $y^\beta = y_0^{\beta_0} \cdots y_{c-1}^{\beta_{c-1}}$ with $\alpha, \beta \in \NN^c$, we define
\[ x^\alpha \cdot y^\beta = \left\{\begin{array}{cl} y^{\beta - \alpha} & \text{if $\beta_i \geq \alpha_i$ for all $i$} \\ 0 & \text{otherwise} \end{array}\right. \]
The $S$-module action on $D$ is defined by extending this action on
monomials linearly.  We call $D$ the module of \emph{inverse  polynomials} since $D$ is isomorphic as an $S$-module to 
\[H^c_{S_+}(S) = \frac{1}{x_0\cdots x_{c-1}}\kk[x_0^{-1}, \dots,  x_{c-1}^{-1}]. \] 
The set $M = (0:_D I)$ is a finitely generated $S$-submodule of $D$ called the \emph{Macaulay inverse system} of $I$, and it is well-known that there is bijective correspondence between ideals $I \subseteq S$ such that $R = S/I$ is Artinian and finitely generated $S$-submodules of $D$.  Given such a submodule $M \subseteq D$, we associate to it the ideal $I = (0:_S M)$.  There is a close relationship between the Hilbert functions of $I$ and $M$.  In particular, $R = S/I$ is Gorenstein with socle in degree $r$ if and only if $M$ is generated by a single inverse polynomial of degree $r$.

Over a field of characteristic zero, it is possible to describe inverse systems by an equivalent action of $S$ on $D$ using partial derivatives.  In the following sections, we will use only the contraction action defined in the preceding paragraph.

In the next section, we will consider quadratic Gorenstein rings $R$ with $c = r + 1$.  We close this background section with a proposition that explains why this is the first interesting case in which to ask whether or not $R$ is Koszul.

\begin{prop}[{\cite[3.1]{minimal:homogeneous:linkage}}] \label{regularity:bound}
Suppose that $R = S/I$ is a quadratic Cohen-Macaulay ring. Then $\reg R \leq \pd_S R$, and equality holds if and only if $R$ is a complete intersection.
\end{prop}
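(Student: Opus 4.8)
The plan is to exploit the general inequality $\reg R \leq \pd_S R$ that holds for any Cohen-Macaulay ring whose defining ideal is generated in degree $\leq 2$, and then analyze when equality is forced. Since $R$ is Cohen-Macaulay, $\pd_S R = c = \hgt I$ by the Auslander--Buchsbaum formula. The quadratic generation of $I$ gives $\beta_{1,j}^S(R) = 0$ for $j \neq 2$, and the key point is a "linear strand climbs at most one row per step" estimate: by a standard argument with the minimal free resolution (e.g.\ chasing the fact that the $i$-th syzygy module of a quadratic ideal has its generators in degree $\leq i+1$), one shows $\beta_{i,j}^S(R) = 0$ for $j > i + 1$, with the possible exception that we must account for the row $j = i+1$. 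Taking $i = c$ gives $\beta_{c,j}^S(R) = 0$ for $j > c+1$; combined with $\reg R = \max\{ j - i : \beta_{i,j}^S(R) \neq 0\}$ and the Gorenstein symmetry \eqref{Gorenstein:Betti:table:symmetry}, this yields $\reg R \leq c = \pd_S R$.

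For the equality case, I would argue as follows. If $R = S/I$ is a quadratic complete intersection, then its resolution is the Koszul complex on $c$ quadrics, so $\beta_{c, 2c}^S(R) \neq 0$ and $\reg R = 2c - c = c = \pd_S R$; this is the easy direction. Conversely, suppose $\reg R = \pd_S R = c$. Then $\beta_{c, 2c}^S(R) \neq 0$, i.e.\ the last free module in the resolution has a generator in degree $2c$ — equivalently, the socle of $R$ (which is one-dimensional since $R$ is Gorenstein, after reducing to the Artinian case by quotienting by a linear system of parameters) sits in degree $r = c$. By the Gorenstein symmetry, $\beta_{0,0}^S(R) = \beta_{c,2c}^S(R) = 1$, and more importantly $\beta_{1,2}^S(R) = \beta_{c-1, 2c-2}^S(R)$. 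The strategy is to show the number of quadric generators $\mu(I) = \beta_{1,2}^S(R)$ must be exactly $c$: one direction is the general bound $\mu(I) \geq \hgt I = c$ for any homogeneous ideal, and for the reverse I would use that a quadratic Gorenstein Artinian algebra with socle degree equal to its codimension has $h$-vector forced to be $(1, c, \binom{c}{2}, \dots)$ with the symmetric shape $h_i = \binom{c}{i}$ — this is exactly the $h$-vector of the complete intersection of $c$ quadrics, and a Hilbert-function comparison (the Hilbert series of $S/I$ is termwise at least that of $S$ modulo $c$ generic quadrics only if $I$ itself is a complete intersection) pins down $I$ as a complete intersection. Alternatively, and perhaps more cleanly, I would invoke the fact that equality $\reg = \pd$ for a Cohen-Macaulay quotient means the resolution is "as long and as wide as possible," which by a theorem on extremal Betti numbers / the structure of the last Betti number forces the resolution to be self-dual with $\beta_{i, 2i}^S(R) = \binom{c}{i}$, i.e.\ the Koszul complex.

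The main obstacle I anticipate is the converse direction: turning "$\reg R = c$" into "$I$ is a complete intersection." The inequality direction is a routine regularity estimate, but the rigidity statement requires either a careful Hilbert-function argument in the Artinian reduction (showing $h_i(R) = \binom{c}{i}$ forces enough quadrics to be a regular sequence) or a structural result about when the tail of a minimal free resolution of a Cohen-Macaulay module can achieve the maximal possible degree. I would lean on the Artinian reduction first (a quadratic Cohen-Macaulay ring stays quadratic, Cohen-Macaulay, and Gorenstein modulo a general linear system of parameters, and neither $\reg$ nor $\pd_S$ nor the complete-intersection property is affected), and then on the observation that $\dim_\kk R_1 = c$ together with $R_i \neq 0$ up to $i = c$ and the symmetry of the $h$-vector leaves no room for $I$ to have more than $c$ minimal quadric generators without the socle appearing in degree $< c$ — forcing the $c$ quadrics to be a regular sequence. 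Since the proposition is quoted from \cite{minimal:homogeneous:linkage}, I would also be content to cite that source for the precise bookkeeping and present here only the conceptual skeleton above.
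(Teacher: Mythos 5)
The paper offers no proof of this proposition at all --- it is quoted verbatim from \cite[3.1]{minimal:homogeneous:linkage} --- so your closing fallback (``cite that source'') is in fact exactly what the authors do. Judged as mathematics, however, your sketch has two genuine gaps. First, the vanishing you propose for the inequality, $\beta_{i,j}^S(R)=0$ for $j>i+1$, is false: it would say every quadratic algebra has a linear resolution of its ideal (regularity $\le 1$). Already a complete intersection of two quadrics has $\beta_{2,4}^S\neq 0$, and Matsuda's ring in Example \ref{Matsuda's:example} has $\beta_{2,4}^S(R)=36$. The standard route to $\reg R\le \pd_S R=c$ is different: pass to an Artinian reduction by a general linear system of parameters (after flat base change to an infinite field; this preserves quadratic generation, regularity, the codimension $c$, the Betti numbers, and the complete intersection property), choose quadrics $q_1,\dots,q_c\in I$ forming a regular sequence, and observe that $R$ is a quotient of the Artinian complete intersection $A=S/(q_1,\dots,q_c)$, which vanishes in degrees $>c$; since the regularity of an Artinian algebra is its top nonzero degree, $\reg R\le c$.

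Second, your treatment of the equality case repeatedly uses that $R$ is Gorenstein (one-dimensional socle, the symmetry \eqref{Gorenstein:Betti:table:symmetry}), but the hypothesis is only Cohen--Macaulay, so none of that is available; and the remaining assertions (the $h$-vector is ``forced'' to be $\binom{c}{i}$, ``extremal Betti numbers force the Koszul complex'') are claims, not arguments. The Gorenstein property you actually need is that of the auxiliary complete intersection $A$, not of $R$: if $\reg R=c$, i.e.\ $R_c\neq 0$ in the Artinian reduction, suppose $I\supsetneq L=(q_1,\dots,q_c)$. Since $I$ is generated by quadrics, there is a quadric $q\in I_2$ whose image $\bar q\in A_2$ is nonzero; as $A$ is Artinian Gorenstein with socle degree $c$, the multiplication pairing $A_2\times A_{c-2}\to A_c$ is perfect, so $\bar q A_{c-2}=A_c$ and hence $(A/\bar qA)_c=0$, forcing $R_c=0$, a contradiction. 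Thus $I=L$ in the reduction, and since the Artinian reduction preserves Betti numbers, $\mu(I)=c=\hgt I$, so $I$ itself is a complete intersection. The converse direction (a quadratic complete intersection has $\reg=c=\pd$, via the Koszul complex) is the one part of your sketch that stands as written.
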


\section{Quadratic Gorenstein Rings with $c = r+1$} \label{deviation:2:case}

In this section, we show that every quadratic Gorenstein ring with $c = r + 1$ is Koszul.  Using linkage, Migliore and Nagel have already computed the $h$-vectors of such rings \cite[3.1]{Gorenstein:algebras:presented:by:quadrics}.  By carefully analyzing their arguments, we are actually able to prove much more.

\begin{thm}
Let $R = S/I$ be a quadratic Cohen-Macaulay ring with $c = \hgt I$.  Then $R$ is a Koszul almost complete intersection if and only if its $h$-vector is given by
\[ h_i(R) = \binom{c}{i} - \binom{c-2}{i-2} \]
\end{thm}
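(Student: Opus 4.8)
The plan is to pass to the Artinian reduction and then route everything through a geometric link to a Gorenstein ideal, in the spirit of the Migliore--Nagel argument.

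First the reductions. Extending the ground field, and then quotienting by a general linear regular sequence, leaves unchanged whether $R$ is quadratic, Cohen--Macaulay, Koszul, or an almost complete intersection, and leaves the $h$-vector unchanged; so I would assume $\kk$ is infinite and $R$ is Artinian, $S = \kk[x_1,\dots,x_c]$. If $h_i(R) = \binom ci - \binom{c-2}{i-2}$ then $\dim_\kk I_2 = \binom{c+1}2 - h_2(R) = c+1$, and since $I$ is generated by quadrics this forces $\mu(I) = c+1 = \hgt I + 1$, i.e. $R$ is automatically an almost complete intersection. Hence the ``if'' direction reduces to proving Koszulness, while in the ``only if'' direction ``almost complete intersection'' just means $\mu(I) = c+1$.

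Now the link. As $\kk$ is infinite and $\hgt I = c$, $c$ general quadrics of $I$ form a complete intersection $L\subseteq I$ of height $c$; since $\dim_\kk(I_2/L_2) = 1$ and $I$ is generated in degree two, $I = L + (f)$ for one further quadric $f$. Set $J = (L:I)$. Then $\omega_{S/J}\cong (I/L)(a)\cong \bigl(S/(L:f)\bigr)(a)$ for a suitable shift $a$, a cyclic module, so $S/J$ is Gorenstein, and Proposition~\ref{h-vectors:and:linkage} gives $h_i(R) = \binom ci - h_{c-i}(S/J)$. Feeding in the forced value $h_2(R) = \binom c2 - 1$ yields $h_{c-2}(S/J) = 1$ and $h_{c-1}(S/J) = h_c(S/J) = 0$, so $S/J$ has socle degree exactly $c-2$. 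From the displayed relation one reads off the elementary equivalence: $h_i(R) = \binom ci - \binom{c-2}{i-2}$ for all $i$ if and only if $h(S/J) = \bigl(\binom{c-2}{i}\bigr)_i$; and using the mapping-cone resolution of $S/J$ coming from the link, together with the Gorenstein hypothesis and the regularity bound of Proposition~\ref{regularity:bound}, I would upgrade the latter to the statement that $J$ is a complete intersection consisting of two linear forms and $c-2$ quadrics.

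With this dictionary the two implications are as follows. For ``only if'': if $R$ is Koszul then $1/H_R(-t) = P^R_\kk(t)$ has non-negative coefficients, and one shows that among the Gorenstein $h$-vectors possible for $S/J$---constrained by symmetry, Macaulay's growth bound, and socle degree $c-2$---only $\bigl(\binom{c-2}{i}\bigr)_i$ survives this sign test (every other candidate forces a negative coefficient in the expansion of $1/H_R(-t)$), which by the equivalence above pins down $h(R)$. For ``if'': the prescribed $h$-vector gives that $J$ is a complete intersection of two linear forms and $c-2$ quadrics, and $R = S/(L:J)$ is its link along the quadratic complete intersection $L$; one then proves $R$ Koszul directly, e.g. by exhibiting a Koszul filtration adapted to the two linear forms of $J$, or a quadratic Gr\"obner degeneration of $I$ after a general coordinate change. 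The main obstacle is exactly these two closing steps: linkage does not transport Koszulness, so ``if'' needs a concrete model for $R$; and ``only if'' needs the delicate sign/growth analysis of $1/H_R(-t)$ that separates the extremal $h$-vector from all its neighbours---this is the quantitative heart of the Migliore--Nagel bookkeeping.
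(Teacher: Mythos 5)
Both directions of your plan break down precisely at the steps you flag as ``the main obstacle,'' and in the ``only if'' direction the tool you propose is demonstrably too weak. You want to pin down $h(S/J)$ purely by the numerical test that $1/H_R(-t) = P^R_{\kk}(t)$ has non-negative coefficients, claiming every other symmetric Gorenstein candidate of socle degree $c-2$ fails this sign test. That is false. Already for $c=4$, the candidate $h(S/J)=(1,1,1)$ is a legitimate Gorenstein $h$-vector of socle degree $2$ satisfying symmetry and Macaulay growth; it gives $h_R(t)=(1+t)^4-t^2(1+t+t^2)=1+4t+5t^2+3t^3$, and the coefficients $b_n$ of $1/h_R(-t)$ satisfy $b_n=4b_{n-1}-5b_{n-2}+3b_{n-3}$ with $b_0,b_1,b_2=1,4,11$, hence $1,4,11,27,65,158,\dots$, all non-negative (the dominant root of $x^3-4x^2+5x-3$ is real, about $2.47$, and exceeds the modulus of the complex pair), so the sign test does not eliminate it. What excludes such candidates is structural, not numerical; in the paper this direction is exactly the $h$-polynomial formula $(1+t)^{c-2}(1+2t)$ for Koszul almost complete intersections from \cite[3.3]{Koszul:ACI's}, a theorem you are implicitly trying to re-derive with an insufficient criterion.

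In the ``if'' direction there are two gaps. First, your ``upgrade'' from $h(S/J)=\bigl(\binom{c-2}{i}\bigr)_i$ to ``$J$ is a complete intersection of two linear forms and $c-2$ quadrics'' is only asserted: Proposition \ref{regularity:bound} requires the ring to be quadratic, which you do not know for $S/J$, and a Gorenstein algebra with the complete-intersection $h$-vector need not be a complete intersection in general, so the mapping cone plus that proposition does not obviously close this. The paper gets the structure of the link differently: from the $h$-vector it reads off $\beta_{1,2}^S(R)=c+1$ and $\beta_{2,3}^S(R)=2$, uses \cite[4.2]{Koszul:algebras:defined:by:3:quadrics} to force $\beta_{3,4}^S(R)=0$, invokes the proof of \cite[4.2]{Koszul:ACI's} to realize $q_1,q_2,q_3$ as the minors of a $3\times 2$ matrix $M$ of linear forms with $q_2,\dots,q_{c+1}$ a regular sequence, and only then identifies $J=(q_2,\dots,q_{c+1}):I$ with the explicit complete intersection $(\ell_{1,1},\ell_{1,2},q_4,\dots,q_{c+1})$ via Proposition \ref{h-vectors:and:linkage}. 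Second, and more seriously, even granting that $J$ is such a complete intersection you give no proof that $R$ is Koszul, deferring to a hoped-for Koszul filtration or quadratic Gr\"obner degeneration; since, as you yourself note, Koszulness does not transfer across links, this is the heart of the matter. The paper closes it by a syzygy analysis (the mapping cone kills $\beta_{2,j}^S(R)$ for $j\ge 5$, the Koszul syzygies on $q_4,\dots,q_{c+1}$ are shown to be minimal generators, whence $q_4,\dots,q_c$ is a regular sequence modulo $I_2(M)$) and then applies the characterization of Koszul almost complete intersections in \cite[3.3]{Koszul:ACI's}. Your reductions to the Artinian infinite-field case and the observation that the stated $h$-vector forces $\mu(I)=c+1$ are correct and parallel to the paper, but the two closing steps are exactly where the proof still has to happen.
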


\begin{proof}
If $R$ is a Koszul almost complete intersection, it follows from \cite[3.3]{Koszul:ACI's} that the $h$-polynomial of $R$ is $h_R(t) = (1+t)^{c-2}(1+ 2t)$ so that
\begin{align*} 
h_i(R) &= 2\binom{c-2}{i-1} + \binom{c-2}{i} = \binom{c-2}{i-1} + \binom{c-1}{i} \\
&= \binom{c-2}{i-1} + \binom{c-2}{i-2} - \binom{c-2}{i-2} + \binom{c-1}{i} = \binom{c}{i} - \binom{c-2}{i-2}
\end{align*}

Conversely, if $R$ is a quadratic Cohen-Macaulay ring with the given $h$-vector, then the $h$-polynomial of $R$ is $h_R(t) = (1+t)^c - t^2(1+t)^{c-2} = (1+t)^{c-2}(1+2t)$ so that
\begin{align} \label{ACI:h-polynomial}
\begin{split}
\sum_j (\sum_{i = 0}^j (-1)^i \beta_{i,j}^S(R))t^j &= (1-t)^ch_R(t) = (1-t^2)^{c-2}(1-t)^2(1+2t) \\
&\equiv (1 - (c-2)t^2 + \binom{c-2}{2}t^4)(1 - 3t^2 + 2t^3) \pmod {t^5} \\
&\equiv 1 -(c+1)t^2 + 2t^3 + (3(c-2) + \binom{c-2}{2})t^4 \pmod {t^5}
\end{split}
\end{align}  
Since $I$ is generated by quadrics, this implies that $\beta_{1,2}^S(R) = c + 1$ and $\beta_{2,3}^S(R) = 2$.  It then follows from \cite[4.2]{Koszul:algebras:defined:by:3:quadrics} that $\beta_{3, 4}^S(R) = 0$ so that $\beta_{2, 4}^S(R) = 3(c-2) + \binom{c-2}{2}$.

Since the conclusion is preserved under flat base change, we may assume that the ground field is infinite.  It follows from \cite[4.2]{Koszul:ACI's} that, if $R$ is a quadratic almost complete intersection over an infinite ground field with $\beta_{2,3}^S(R) \geq 2$, we can write $I = (q_1, \dots, q_{c+1})$ for some quadrics $q_i$, where $q_2, \dots, q_{c+1}$ form a regular sequence and $q_1, q_2, q_3$ are the corresponding minors of a $3 \times 2$ matrix $M = (\ell_{i,j})$ of linear forms.  In particular, we have $\hgt I_2(M) = 2$ as $(q_2, q_3) \subseteq I_2(M)$.  Additionally, \[(q_2, \dots, q_{c+1}) \subseteq J' = (\ell_{1,1}, \ell_{1,2}, q_4, \dots, q_{c+1})\] so that $\hgt J' = c$ and $\ell_{1,1}, \ell_{1,2}, q_4, \dots, q_{c+1}$ is a regular sequence.  Now, set $L = (q_2, \dots, q_{c+1})$, and consider the linked ideal $J = (L : I)$.  Since $q_2$ and $q_3$ are minors of $M$, we have $\ell_{1,1}q_1, \ell_{1,2}q_1 \in (q_2, q_3)$ so that $J' \subseteq J$.  Then $J$ and $J'$ are ideals of height $c$ with the same $h$-vector by Proposition \ref{h-vectors:and:linkage}.  Hence, they have the same Hilbert function and are equal.

From the natural short exact sequence $0 \to S/J(-2) \to S/L \to R \to 0$, we can obtain a free resolution of $R$ from the free resolutions of $S/J$ and $S/L$ via a mapping cone.  In particular, this yields 
\[\beta^S_{2,j}(R) \leq \beta_{2,j}^S(S/L) + \beta^S_{1,j-2}(S/J)\]  
for all $j$.  Combining this bound with the description of $J$ from the previous paragraph, we see that $\beta_{2,j}^S(R) = 0$ for $j \geq 5$.  

This shows that $\syz_1^S(I)$ is generated by linear syzygies and Koszul syzygies on the minimal generators of $I$.  In fact, the columns of $M$ must be the two independent linear syzygies of $I$, and the $S$-span of these linear syzygies contains the Koszul syzygies involving any two of $q_1, q_2, q_3$.  Let $W$ denote the $\kk$-span of the Koszul syzygies involving some $q_j$ for $j \neq 1, 2, 3$.  We claim that $W \cap S_+(\syz_1^S(I)_3) = 0$.  If not, there is some nonzero $\kk$-linear combination of Koszul syzygies 
\[ \sum_{\substack{1 \leq i < j \leq c+1 \\ j \geq 4}} a_{i,j}(q_ie_j - q_je_i) \]
(where $e_i$ denotes the $i$-th standard basis vector of $S(-2)^{c+1}$) which is an $S$-linear combination of the linear syzygies.  But such an $S$-linear combination must be zero in its $j$-th coordinate for all $j \geq 4$.  If $a_{i,j} \neq 0$, then examining the $j$-th coordinate of the above Koszul syzygy yields a linear dependence relation on the $q_i$, which is impossible since the $q_i$ are the minimal generators for $I$.  Hence, the claim holds so that the Koszul syzygies in $W$ are part of a minimal set of generators for $\syz_1^S(I)$.  As $\beta_{2,j}^S(R) = 0$ for $j \geq 5$ and $\beta_{2,4}^S(R) = 3(c-2) + \binom{c-2}{2}$ is precisely the number of Koszul syzygies in $W$, we see that $\syz_1^S(I)$ is minimally generated by the columns of $M$ and the Koszul syzygies in $W$.

Consequently, we see that $q_{c+1}$ is a nonzerodivisor modulo $(q_1, \dots, q_c)$.  Then $S/(q_1, \dots, q_c)$ is Cohen-Macaulay and has $h$-polynomial given by $(1+t)h_{S/(q_1, \dots, q_c)}(t) = h_R(t)$, so that $\deg h_{S/(q_1, \dots, q_c)}(t) = c-2$ and
\[ h_i(S/(q_1, \dots, q_c)) = h_{i+1}(R) - h_{i+1}(S/(q_1, \dots, q_c)) = \binom{c-1}{i} - \binom{c-1}{i-2}   \]
by a downward induction.  Hence, an induction on $c$ shows that $q_4, \dots, q_c$ is a regular sequence modulo $I_2(M)$, and it follows from \cite[3.3]{Koszul:ACI's} that $R$ is Koszul.
\end{proof}

\begin{thm} \label{c=r+1:is:Koszul}
Let $R = S/I$ be a quadratic Gorenstein ring over an infinite ground field with $\reg R +1 = \hgt I = c$.  Then $I = \Pf(M) + (q_6, \dots, q_{c+2})$ for some $5 \times 5$ alternating matrix $M$ of linear forms such that $\hgt \Pf(M) = 3$ and some quadrics $q_6, \dots, q_{c+2}$ which are a regular sequence modulo $\Pf(M)$.  Moreover, $R$ is Koszul.
\end{thm}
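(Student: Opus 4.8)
The plan is to reduce the Koszul assertion to the structural one and then obtain the structure by sharpening Migliore--Nagel's linkage computation. Since $R$ is Cohen--Macaulay, $\pd_S R = c$, whereas $\reg R = c - 1$, so Proposition~\ref{regularity:bound} shows $R$ is not a complete intersection; Kunz's theorem then forbids $R$ from being an almost complete intersection, so $\mu(I) \geq c + 2$. Migliore--Nagel \cite[3.1]{Gorenstein:algebras:presented:by:quadrics} compute, via linkage, that the $h$-vector of such an $R$ is forced: $h_R(t) = (1+t)^{c-3}(1 + 3t + t^2)$. Feeding this into the $K$-polynomial identity
\[ \sum_j \Bigl(\sum_i (-1)^i \beta_{i,j}^S(R)\Bigr) t^j = (1-t)^c h_R(t) = (1-t^2)^{c-3}\bigl(1 - 5t^2 + 5t^3 - t^5\bigr) \]
and reading off coefficients through degree four --- using that $I$ is generated by quadrics (so $\beta_{1,j}^S(R) = 0$ for $j \neq 2$, $\beta_{2,j}^S(R) = 0$ for $j \leq 2$) and that $\beta_{3,4}^S(R) = 0$, which follows as in the proof of the preceding theorem --- gives $\beta_{1,2}^S(R) = c + 2$, $\beta_{2,3}^S(R) = 5$, and the value of $\beta_{2,4}^S(R)$. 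So $R$ is a Gorenstein ring of deviation two whose $c + 2$ quadric generators carry exactly five independent linear first syzygies.

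Next I would extract the Pfaffian structure, which I expect to be the main obstacle. The aim is to choose a minimal generating set $q_1, \dots, q_{c+2}$ of $I$ for which $q_1, \dots, q_5$ are the $4 \times 4$ sub-Pfaffians of a $5 \times 5$ alternating matrix $M$ of linear forms. The template is the $3 \times 2$-matrix argument in the preceding theorem, where two linear syzygies among $c+1$ quadrics forced three generators to be the maximal minors of a $3 \times 2$ matrix of linear forms and the remaining $c - 2$ to form a regular sequence. Here the governing result should be the Buchsbaum--Eisenbud structure theorem for codimension-three Gorenstein ideals: after a suitable change of the generating set, the five linear syzygies --- assembled into a matrix and constrained by the self-duality of the minimal free resolution of $R$ --- must involve only five of the $c + 2$ generators; the resulting alternating matrix $M$ is the Buchsbaum--Eisenbud matrix of $\Pf(M) := (q_1, \dots, q_5)$, and $M$ has linear entries because the resolution of $R$ stays linear through homological degree two. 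Granting this, $\hgt \Pf(M) = 3$: it is at most three by Buchsbaum--Eisenbud, and at least three since otherwise $\hgt I \leq \hgt \Pf(M) + (c - 3) < c$. As $S/\Pf(M)$ is then Cohen--Macaulay of codimension three with $\hgt I = \hgt \Pf(M) + (c - 3)$, the quadrics $q_6, \dots, q_{c+2}$ form a regular sequence modulo $\Pf(M)$, and $I = \Pf(M) + (q_6, \dots, q_{c+2})$. A Betti-number count --- using $\beta_{2,3}^S(R)$, $\beta_{2,4}^S(R)$, and the Koszul syzygies on $q_6, \dots, q_{c+2}$, exactly as in the preceding theorem --- confirms that $q_1, \dots, q_{c+2}$ is a minimal generating set with no syzygy unaccounted for.

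Finally, Koszulness is formal once the structure is known. The ring $S/\Pf(M)$ is a quadratic Gorenstein ring of codimension three, hence of regularity two (its Buchsbaum--Eisenbud resolution $0 \to S(-5) \to S(-3)^5 \to S(-2)^5 \to S$ has regularity two), so it is Koszul by the theorem of Conca--Rossi--Valla recalled in Section~\ref{intro}. Since $R = \bigl(S/\Pf(M)\bigr)/(q_6, \dots, q_{c+2})$ is obtained from a Koszul algebra by killing a regular sequence of quadrics, it is again Koszul --- the extension of Tate's theorem \cite{Tate} to complete intersections of quadrics inside an arbitrary Koszul algebra (see, e.g., \cite{Koszul:algebras:and:their:syzygies}). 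The hard step is thus the middle one: converting the numerical data --- $c + 2$ quadric generators, five linear syzygies, Gorenstein symmetry --- into the honest splitting $I = \Pf(M) + (q_6, \dots, q_{c+2})$ requires importing the Buchsbaum--Eisenbud classification and controlling how the extra $c - 3$ generators and their Koszul syzygies sit inside the self-dual resolution, the higher-codimension analogue of the syzygy bookkeeping carried out in the proof of the preceding theorem.
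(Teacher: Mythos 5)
Your numerics (the Migliore--Nagel $h$-vector, the $K$-polynomial identity giving $\beta_{1,2}^S(R)=c+2$ and $\beta_{2,3}^S(R)=5$), your height count, your regular-sequence argument, and your final Koszul deduction are all fine, but the proposal has a genuine gap at exactly the point you flag as ``the main obstacle'': nothing in it actually produces the $5\times 5$ alternating matrix. You assert that, after a change of generators, the five linear syzygies ``must involve only five of the $c+2$ generators'' and that the resulting matrix is alternating and is the Buchsbaum--Eisenbud matrix of $(q_1,\dots,q_5)$, but this is precisely the statement to be proved, and the Buchsbaum--Eisenbud structure theorem cannot be invoked until you already know that those five quadrics generate a height-three Gorenstein ideal (your ``$\hgt\Pf(M)\le 3$ by Buchsbaum--Eisenbud'' is circular for the same reason). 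A second, related gap: you claim $\beta_{3,4}^S(R)=0$ ``follows as in the proof of the preceding theorem'' and use it to read off $\beta_{2,4}^S(R)$ at the outset. In the preceding theorem that vanishing came from the result of Boocher--Hassanzadeh--Iyengar on ideals with $\beta_{2,3}=2$ (three quadrics beyond a regular sequence); it does not apply when $\beta_{2,3}=5$, and in fact the paper obtains $\beta_{3,4}=0$ and the value of $\beta_{2,4}$ only at the very end, as a consequence of the Pfaffian structure (the five linear syzygies are the columns of $M$, which admit no linear syzygies among themselves).

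The paper's mechanism for the missing step is linkage, which your outline never uses for the structure. One writes $I=(q_1,\dots,q_{c+2})$ with $L=(q_1,\dots,q_c)$ a quadratic complete intersection, links to $J=(L:I)$, and computes $h(S/J)$ by Proposition~\ref{h-vectors:and:linkage}; this forces a linear form $z\in J$ with $J/zS$ a Koszul almost complete intersection by the preceding theorem. The delicate point is showing $J=(L,z)$: if not, one manufactures a linear second syzygy, hence $\beta_{3,4}^S(R)\neq 0$, transfers this by Gorenstein symmetry to $\beta_{c-3,2c-5}^S(R)\neq 0$, and then, via $I/L\iso\omega_{S/J}$ (up to twist), to $\beta_{4,5}^S(S/J)\neq 0$, contradicting the Koszul ACI structure of $J/zS$. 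With $J=(L,z)$ in hand, the $3\times 2$ matrix describing $q_1,q_2,q_3$ modulo $z$ lifts to an explicit $5\times 5$ alternating matrix $M$ containing $z$, and the two remaining Pfaffians $f_4,f_5$ are shown to lie in $I$ because $zf_4,zf_5\in(q_1,q_2,q_3)$ and $(L:z)=(L:J)=I$; only then do the syzygy counts ($\beta_{2,3}=5$, $\beta_{3,4}=0$, $\beta_{2,4}=5(c-3)+\binom{c-3}{2}$) and the regular-sequence conclusion follow. Without this (or some substitute argument converting ``$c+2$ quadrics, five linear syzygies, self-dual resolution'' into the splitting $I=\Pf(M)+(q_6,\dots,q_{c+2})$), the proposal proves only the numerical consequences, not the theorem.
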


\begin{proof}
Since $\reg R < \hgt I = c$, $I$ is not complete intersection, and because $R$ is a quadratic Gorenstein ring, this implies that $\reg R \geq 2$ so that $c \geq 3$.  By \cite[3.1]{Gorenstein:algebras:presented:by:quadrics}, the $h$-vector of $R$ is given by
\[ h_i(R) = \binom{c-1}{i} + \binom{c-3}{i-1} \]
so that the $h$-polynomial is $h_R(t) = (1+t)^{c-1} + t(1+t)^{c-3} = (1+t)^{c-3}(1+3t+t^2)$.  As in the proof of the previous theorem, this implies that 
\begin{align} \label{Gorenstein:h-polynomial}
\begin{split}
\sum_j (\sum_{i = 0}^j (-1)^i \beta_{i,j}^S(R))t^j &= (1-t)^ch_R(t) = (1-t^2)^{c-3}(1-t)^3(1+3t+t^2) \\
&\equiv (1 - (c-3)t^2 + \binom{c-3}{2}t^4)(1 - 5t^2 + 5t^3) \pmod {t^5} \\
&\equiv 1 -(c+2)t^2 + 5t^3 + (5(c-3) + \binom{c-3}{2})t^4 \pmod {t^5}
\end{split}
\end{align}  
Since $I$ is generated by quadrics, this implies that $\beta_{1,2}^S(R) = c + 2$ and $\beta_{2,3}^S(R) = 5$.  We can then write $I = (q_1, \dots, q_{c+2})$ for some quadrics $q_i$ with $q_1, \dots, q_c$ a regular sequence.  Set $L = (q_1, \dots, q_c)$, and consider the linked ideal $J = (L : I)$.  Then Proposition \ref{h-vectors:and:linkage} shows that the $h$-vector of $S/J$ is given by
\[ h_i(S/J) = \binom{c-1}{i} - \binom{c-3}{i-2} \]
This implies that $J$ contains a linear form $z$, and $J/zS$ is a Koszul almost complete intersection by the preceding theorem.  

Next, we claim that $J = (L, z)$.  Since $J/zS$ is minimally generated by $c$ quadrics, it is enough to show that $q_1, \dots, q_c$ are independent modulo $z$.  Suppose this is not the case.  We note that $L \subseteq (L, z) \subseteq J$ implies $\hgt (L, z) = c$.  Hence, we may assume that $q_1, \dots, q_{c-1}$ are a regular sequence modulo $z$ and $q_c \in (q_1, \dots, q_{c-1}, z)$, and so, after a suitable change of generators for $L$, we may further assume that $q_c = z\ell$ for some linear form $\ell$.  In addition, we know that $zq_{c+1} = \ell_1q_1 + \cdots + \ell_cq_c$ for some linear forms $\ell_i$ as $zq_{c+1} \in L$.  Modulo $z$, this corresponds to a linear syzygy on the images of $q_1, \dots, q_{c-1}$, but since $q_1, \dots, q_{c-1}$ are a regular sequence modulo $z$, this syzygy must be trivial.  Hence, we have $\ell_i = \lambda_iz$ for some $\lambda_i \in \kk$, and after cancelling $z$, we have $q_{c+1} = \lambda_1q_1 + \cdots + \lambda_{c-1}q_{c-1} + \ell_c\ell$.  And so, after another change of generators for $I$, we may assume that $q_{c+1} = \ell_c\ell$ and, similarly, that $q_{c+2} = h_c\ell$ for some linear form $h_c$.  In this case, the linear syzygies of $I$ contain the syzygies on $q_c, q_{c+1}, q_{c+2}$ corresponding to the columns of the matrix
\[ \begin{pmatrix} \ell_c & h_c & 0 \\ -\ell & 0 & h_c \\ 0 & -\ell & -\ell_c \end{pmatrix} \]
which in turn has a linear second syzygy $(h_c, -\ell_c, \ell)$, so that $\beta_{3,4}^S(R) \neq 0$.  Since $R$ is Gorenstein with $\reg R = c - 1$, this implies that $\beta_{c-3, 2c-5}^S(R) \neq 0$.  The short exact sequence $0 \to I/L \to S/L \to R \to 0$ induces an
exact sequence
 \[0 = \Tor_{c-3}^S(S/L, \kk)_{2c-5} \to \Tor_{c-3}^S(R, \kk)_{2c-5} \to \Tor_{c-4}^S(I/L, \kk)_{2c-5}\] 
 so that $\beta^S_{c-4,2c-5}(I/L) \neq 0$.  However, we know that 
\[I/L \iso \Hom_S(S/J, S/L) \iso \omega_{S/J}(2c-n) \iso \Ext^c_S(S/J, S)(2c) \]  
where $n = \dim S$.  Since $S/J$ is Cohen-Macaulay, it follows that \[\beta_{4,5}^S(S/J) = \beta^S_{c-4,-5}(\Ext^c_S(S/J, S)) = \beta^S_{c-4,2c-5}(I/L) \neq 0.\]  However, as $J/zS$ is a Koszul almost complete intersection, \cite[3.3]{Koszul:ACI's} implies that $\beta_{3,4}^{S/zS}(S/J) = 0$ so that $\beta_{4,5}^S(S/J) = 0$, which is the desired contradiction.  Therefore, we see that $J = (L, z)$ as claimed.

After a suitable change of generators for $L$ modulo $z$ there is a $3 \times 2$ matrix $\bar{M} = (\bar{\ell}_{i,j})$ of linear forms in $\bar{S} = S/zS$ such that the images of $q_1, q_2, q_3$ are the corresponding minors of $\bar{M}$.  Let $M = (\ell_{i,j})$ be a lift of this matrix of linear forms to $S$.  Then we have
\begin{align*}
q_1 &= - zu_1 + \ell_{2,1}\ell_{3,2} - \ell_{2,2}\ell_{3,1}  \\
q_2 &= zu_2 -\ell_{1,1}\ell_{3,2} + \ell_{1,2}\ell_{3,1} \\
q_3 &= - zu_3  + \ell_{1,1}\ell_{2,2} - \ell_{1,2}\ell_{2,1}
\end{align*}
for some linear forms $u_i$.  That is, we can obtain $q_1, q_2, q_3$ as submaximal Pfaffians of the alternating matrix
\[ M = \begin{pmatrix}
0 & u_3 & u_2 & \ell_{1, 1} & \ell_{1,2} \\
-u_3 & 0 & u_1 & \ell_{2, 1} & \ell_{2,2} \\
-u_2 & -u_1 & 0 & \ell_{3,1} & \ell_{3,2} \\
-\ell_{1,1} & -\ell_{2,1} & -\ell_{3,1} & 0 & z \\
-\ell_{1,2} & -\ell_{2,2} & -\ell_{3,2} & -z & 0
\end{pmatrix}
\]
The other two pfaffians of $M$ are
\begin{align*}
f_4 &=\ell_{3,2}u_3 - \ell_{2,2}u_2 + \ell_{1,2}u_1  \\
f_5 &= -\ell_{3,1}u_3 + \ell_{2,1}u_2 - \ell_{1,1}u_1
\end{align*}
Since $q_1, q_2, q_3$ is a regular sequence, $\hgt \Pf(M) = 3$ so that $\Pf(M)$ is a Gorenstein ideal with minimal free resolution given by the Buchsbaum-Eisenbud structure theorem \cite[3.4.1]{Bruns:Herzog}.  In particular, $M$ is the matrix of first syzygies on $\Pf(M)$ so that $zf_4, zf_5 \in (q_1, q_2, q_3)$, and hence, we have $f_4, f_5 \in (L : z) = (L : J) = I$.  Additionally, $q_1, q_2, q_3, f_4, f_5$ must be independent quadrics since otherwise the Buchsbaum-Eisenbud complex would not be a resolution of $\Pf(M)$.  Hence, after replacing the original $q_i$ for $i \geq 4$, we may assume that $I = \Pf(M) + (q_6, \dots, q_{c+2})$ for some quadrics $q_i$.  

As the columns of $M$ are independent linear syzygies on $I$ and $\beta_{2,3}^S(R) = 5$, these must be all of the linear syzygies of $I$.  Consequently, we must have $\beta_{3,4}^S(R) = 0$ as there are no linear syzygies on the columns of $M$, and so, \eqref{Gorenstein:h-polynomial} implies that \[\beta_{2,4}^S(R) = 5(c-3) + \binom{c-3}{2}.\]  This is precisely the number of Koszul syzygies involving some $q_j$ for $j \geq 6$; the other Koszul syzygies are in the $S$-span of the linear syzygies.  By arguing inductively as in the proof of the preceding theorem, we see that $q_6, \dots, q_{c+2}$ must be a regular sequence modulo $\Pf(M)$.
\end{proof}

By flat base change to an infinite ground field, we have the following corollary. 

\begin{cor}
Let $R = S/I$ be a quadratic Gorenstein ring with $\reg R +1 = \hgt I = c$.  Then $R$ is Koszul with multiplicity $e(R) = 5 \cdot 2^{c-3}$ and Betti table 
\begin{center}
\scalebox{0.9}{
\textnormal{
\begin{tabular}{c|ccccccccc} 
  & 0 & 1 & 2 & 3 & 4 & $\cdots$ & $c-2$ & $c-1$ & $c$ \\ 
\hline 
0 & 1 & -- & -- & -- & -- & & -- & -- & -- \\ 
1 & -- & $c+2$ & 5 & -- & -- & & -- & -- & --  \\
2 & -- & -- & $5\binom{c-3}{1}+\binom{c-3}{2}$ & $5\binom{c-3}{1} + 1$ & -- & & -- & -- & --  \\
3 & -- & -- & -- & $5\binom{c-3}{2} + \binom{c-3}{3}$ & $5\binom{c-3}{2} + \binom{c-3}{1}$ \\
$\vdots$ & & & & & $\ddots$ & $\ddots$ \\
$c-2$ & -- & -- & -- & -- & & & $5$ & $c + 2$ & -- \\
$c-1$ & -- & -- & -- & -- & & & -- & -- & 1
\end{tabular}
}}
\end{center}
Specifically, we have 
\[
\beta_{i,2i}^S(R) = 5\binom{c-3}{i-1}+ \binom{c-3}{i}
\quad \text{and} \quad 
\beta_{i,2i-1}^S(R) = 5\binom{c-3}{i-2} + \binom{c-3}{i-3}
\] 
so that 
\[\beta_i^S(R) = \binom{c}{i} + 2\binom{c-2}{i-1} \]
for all $i$.
\end{cor}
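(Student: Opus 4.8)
The plan is to reduce to the case of an infinite ground field, invoke Theorem~\ref{c=r+1:is:Koszul}, and read the Betti table off the resulting presentation of $I$. The graded Betti numbers $\beta_{i,j}^S(R)$, the multiplicity $e(R)$, and the Betti numbers $\beta^R_{i,j}(\kk)$ of the residue field are all unchanged under the flat base change $\kk \to K$ to an extension field, so $R$ is Koszul if and only if $R\otimes_\kk K$ is, and the Betti table and multiplicity may be computed after such a base change; hence we may assume $\kk$ is infinite. Then Theorem~\ref{c=r+1:is:Koszul} gives $I = \Pf(M) + (q_6,\dots,q_{c+2})$ with $M$ a $5\times 5$ alternating matrix of linear forms, $\hgt \Pf(M) = 3$, and $q_6,\dots,q_{c+2}$ a regular sequence modulo $\Pf(M)$; in particular $R$ is Koszul, which is the first claim. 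For the multiplicity, $R$ is Cohen-Macaulay, and by the computation in the proof of Theorem~\ref{c=r+1:is:Koszul} its $h$-polynomial is $h_R(t) = (1+t)^{c-3}(1+3t+t^2)$, so $e(R) = h_R(1) = 5\cdot 2^{c-3}$.

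For the Betti table, I would first record the minimal $S$-free resolution of $S/\Pf(M)$. Since $\hgt \Pf(M) = 3$ and the five submaximal Pfaffians of $M$ are quadrics, the Buchsbaum-Eisenbud structure theorem \cite[3.4.1]{Bruns:Herzog} gives
\[ 0 \longrightarrow S(-5) \longrightarrow S(-3)^{5} \xrightarrow{\,M\,} S(-2)^{5} \longrightarrow S \longrightarrow S/\Pf(M) \longrightarrow 0, \]
so $S/\Pf(M)$ has Betti numbers $1,5,5,1$ in homological degrees $0,1,2,3$ and internal degrees $0,2,3,5$ respectively. Since $q_6,\dots,q_{c+2}$ is a regular sequence on $S/\Pf(M)$, the minimal $S$-free resolution of $R = S/\big(\Pf(M)+(q_6,\dots,q_{c+2})\big)$ is the tensor product of the above complex with the Koszul complex on the $c-3$ quadrics $q_6,\dots,q_{c+2}$ (equivalently, an iterated mapping cone along these nonzerodivisors). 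This tensor complex is acyclic precisely because $q_6,\dots,q_{c+2}$ is a regular sequence modulo $\Pf(M)$, and it is minimal because every entry of every differential---those inherited from the Buchsbaum-Eisenbud complex and the multiplications by $\pm q_i$---lies in the homogeneous maximal ideal of $S$.

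It then remains to count graded pieces. A basis element of the Koszul summand in homological degree $b$ (internal degree $2b$, occurring $\binom{c-3}{b}$ times) paired with a basis element of the term $F_a$ of the Buchsbaum-Eisenbud resolution, of internal degree $d_a\in\{0,2,3,5\}$, sits in homological degree $a+b$ and internal degree $d_a+2b$. Collecting these by the parity of $d_a$ yields
\[ \beta_{i,2i}^S(R) = 5\binom{c-3}{i-1}+\binom{c-3}{i}, \qquad \beta_{i,2i-1}^S(R) = 5\binom{c-3}{i-2}+\binom{c-3}{i-3}, \]
with all other $\beta_{i,j}^S(R)$ equal to zero, which is precisely the displayed Betti table. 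Summing over $j$, $\sum_i \beta_i^S(R)\,t^i = \big((1+t)^3 + 2t(1+t)\big)(1+t)^{c-3} = (1+t)^c + 2t(1+t)^{c-2}$, and comparing coefficients of $t^i$ gives $\beta_i^S(R) = \binom{c}{i} + 2\binom{c-2}{i-1}$. The only step beyond routine bookkeeping is the assertion that tensoring with the Koszul complex produces the \emph{minimal} resolution; this rests on the regular-sequence conclusion of Theorem~\ref{c=r+1:is:Koszul} (for acyclicity) together with the positivity of the degrees of the $q_i$ (for minimality).
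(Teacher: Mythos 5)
Your proposal is correct and follows essentially the same route the paper intends: flat base change to an infinite field, apply Theorem~\ref{c=r+1:is:Koszul}, and obtain the minimal free resolution of $R$ as the tensor product of the Buchsbaum--Eisenbud resolution of $S/\Pf(M)$ (ranks $1,5,5,1$ in internal degrees $0,2,3,5$) with the Koszul complex on $q_6,\dots,q_{c+2}$, from which the stated Betti numbers, their totals, and $e(R)=h_R(1)=5\cdot 2^{c-3}$ all follow as you computed.
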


As a consequence of the above corollary, we see that $\binom{c}{i} \leq \beta_i^S(R) \leq \binom{c+2}{i}$ for all $i$ so that both the Betti number bounds proposed by Buchsbaum-Eisenbud-Horrocks Conjecture and by \cite[6.5]{free:resolutions:over:Koszul:algebras} hold for this class of Koszul algebras.

\begin{question}
Using the fact that we know the entire Betti table of $R$ and that $R$ is Koszul from the preceding corollary, does the conclusion of Theorem \ref{c=r+1:is:Koszul} hold without the infinite ground field hypothesis?
\end{question}

We suspect that the answer to the above question is yes since such a structure theorem is possible for Koszul almost complete intersections without any restriction on the ground field.

\begin{question}
Is every quadratic Gorenstein ring $R = S/I$ where $I$ has $c + 2$ minimal quadric generators necessarily of the form described in Theorem \ref{c=r+1:is:Koszul}?
\end{question}

\noindent Huneke-Ulrich ideals \cite{Huneke-Ulrich:ideals} are a well-known class of Gorenstein ideals of deviation two, but these ideals are never quadratic in codimension greater than three.  It would also be interesting to explore when such rings are G-quadratic or admit a Gr\"obner flag.

\section{Quadratic Gorenstein Rings with $r = 4$ Which Are Not Koszul}
\label{r>=4}

\begin{thm} \label{r=4:c>=7} 
Let $S = \kk[x_0,\ldots,x_{c-1}]$ where $c \geq 7$, and let $D = \kk[y_0, \dots, y_{c-1}]$ denote the module of inverse polynomials as in \S \ref{background}.  Consider the inverse polynomial
\[
F = \sum_{i \in \ZZ/c\ZZ}  y_iy_{i+1}y_{i+2}^2
\]
and let $I_F = (0:_S F) \subseteq S$ be the corresponding ideal.  Then $R = S/I_F$ is a quadratic Gorenstein ring which is not Koszul, with $h$-vector $h(R) = (1, c, 2c, c,1)$.
\end{thm}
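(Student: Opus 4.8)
The plan is to verify, in turn: that $R$ is Artinian Gorenstein with socle degree $4$ (so $\reg R = 4$), its Hilbert function, quadratic generation, and---the crux---the failure of Koszulness. The first point is immediate from the inverse-systems dictionary of \S\ref{background}: $F$ is a single inverse polynomial of degree $4$, so $R = S/I_F$ is Artinian Gorenstein with socle in degree $4$, and being Cohen--Macaulay of dimension $0$ its regularity is the top degree $4$ of its $h$-vector. For the Hilbert function one computes $H_R(j) = \dim_{\kk}(S_j\cdot F)$ with the contraction action. Here $H_R(0)=1$; since every variable divides a term of $F$, the $c$ forms $x_m\cdot F = y_{m+1}y_{m+2}^2 + y_{m-1}y_{m+1}^2 + y_{m-2}y_{m-1}y_m$ are linearly independent---$y_{m+1}y_{m+2}^2$ appears in $x_m\cdot F$ and in no other $x_{m'}\cdot F$---so $H_R(1)=c$ and $I_F$ is nondegenerate, and Gorenstein symmetry gives $H_R(3)=c$, $H_R(4)=1$. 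It remains to see $H_R(2)=2c$: a direct calculation shows that, up to the cyclic symmetry, the only nonzero second contractions are $x_m^2\cdot F = y_{m-2}y_{m-1}$, $x_mx_{m+1}\cdot F = y_{m+2}^2 + y_{m-1}y_{m+1}$, and $x_mx_{m+2}\cdot F = y_{m+1}y_{m+2} = x_{m+3}^2\cdot F$, while $x_mx_n\cdot F = 0$ whenever $m,n$ have cyclic distance at least $3$ (the hypothesis $c\ge 7$ ensures cyclic distance behaves like ordinary distance). Hence $S_2\cdot F$ is spanned by the $c$ ``consecutive'' monomials $y_ay_{a+1}$ together with the $c$ forms $y_a^2 + y_{a-3}y_{a-1}$; the two batches have disjoint monomial supports and the distinct squares $y_a^2$ make the second one independent, so $H_R(2)=2c$ and $h(R) = (1,c,2c,c,1)$.

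The same calculation pins down $(I_F)_2$: it is spanned by the monomials $x_mx_n$ with $m,n$ of cyclic distance $\ge 3$ together with the binomials $x_mx_{m+2} - x_{m+3}^2$, and comparison with $\dim_{\kk}(I_F)_2 = \binom{c+1}{2}-2c$ shows these exhaust it. Quadratic generation then reduces to $\beta_{1,3}^S(R)=\beta_{1,4}^S(R)=0$: generators in degree $\ge 6$ are excluded since $\reg R = 4$, and $\beta_{1,5}^S(R)=0$ follows from a short argument using that $F$ is not a monomial (equivalently $S_1\cdot(I_F)_4 = S_5$). Using $\dim_{\kk}(I_F)_3 = \binom{c+2}{3}-c$ and $\dim_{\kk}(I_F)_4 = \binom{c+3}{4}-1$ from the $h$-vector, the vanishing of $\beta_{1,3}^S(R)$ and $\beta_{1,4}^S(R)$ is the finite check that the explicit quadrics above already span $(I_F)_3$ and $(I_F)_4$, which their transparent monomial structure and the $\ZZ/c\ZZ$-symmetry make manageable. (Because $R$ will fail to be Koszul, $I_F$ has no quadratic Gr\"obner basis in any coordinates, so this step genuinely has to be a dimension count, not a Gr\"obner degeneration.)

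The main obstacle is proving that $R$ is not Koszul. The Hilbert series alone does not obstruct Koszulness (one checks the low-order coefficients of $1/H_R(-t)$ are nonnegative), so we must examine $\Tor^R(\kk,\kk)$ directly. Since $I_F$ is quadratic, $\Tor_2^R(\kk,\kk)$ is concentrated in degree $2$, and $\beta_{3,3}^R(\kk) = \dim_{\kk}R^!_3$ is forced to equal $c^3-4c^2+c$ by the identity $\sum_{i,j}(-1)^i\beta_{i,j}^R(\kk)\,t^j = 1/H_R(t)$; so the first possible obstruction is $\beta_{3,4}^R(\kk) = \dim_{\kk}\Tor_3^R(\kk,\kk)_4$, and reading the coefficient of $t^4$ in the same identity (with $\beta_{1,4}^R(\kk)=\beta_{2,4}^R(\kk)=0$ and $\beta_{4,4}^R(\kk) = \dim_{\kk}R^!_4$) gives $\beta_{3,4}^R(\kk) = \dim_{\kk}R^!_4 - (c^4-6c^3+6c^2-1)$. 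Thus it is enough to compute the Hilbert function of the quadratic dual $R^!$ in degree $4$. In the dual basis $\{z_a\}$ of $V^* = (R_1)^*$, the relation-annihilator is the $2c$-dimensional space $N = \langle z_az_{a+1},\ z_a^2 + z_{a-3}z_{a-1} : a\in\ZZ/c\ZZ\rangle \subseteq (V^*)^{\otimes 2}$ (Macaulay-dual to $S_2\cdot F$), and $R^!_4 = (V^*)^{\otimes 4}\big/\big(N\otimes (V^*)^{\otimes 2} + V^*\otimes N\otimes V^* + (V^*)^{\otimes 2}\otimes N\big)$. An inclusion--exclusion, using the forced value of $\dim_{\kk}R^!_3$, collapses this to $\beta_{3,4}^R(\kk) = 1 - \delta$, where $\delta$ is the dimension of the space of tensors $\Phi\in(V^*)^{\otimes 4}$ each of whose three consecutive two-slot blocks lies in $N$. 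The remaining task---where I expect the real work to lie and the hypothesis $c\ge 7$ to be used---is the concrete, $\ZZ/c\ZZ$-symmetric linear-algebra claim that no such nonzero $\Phi$ exists, whence $\beta_{3,4}^R(\kk) = 1$ and $R$ is not Koszul. This is consistent with Matsuda's example, which realizes the same $h$-vector at $c=7$ and satisfies $\beta_{3,4}^R(\kk) = 1$; should $\delta$ unexpectedly be nonzero for some $c$, one would instead detect non-Koszulness at $\Tor_4^R(\kk,\kk)$ by the same machinery.
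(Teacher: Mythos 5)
Your setup (Gorenstein with socle degree $4$, the computation of $S_2\cdot F$, and hence $h(R)=(1,c,2c,c,1)$ and the explicit quadrics $x_mx_n$ with cyclic distance $\ge 3$ and $x_mx_{m+2}-x_{m+3}^2$) matches the paper, but the two substantive parts of the theorem are not actually proved. The decisive gap is non-Koszulness. You reduce it, via the quadratic dual, to the claim that $\delta:=\dim_{\kk}\bigl(N\otimes(V^*)^{\otimes 2}\,\cap\,V^*\otimes N\otimes V^*\,\cap\,(V^*)^{\otimes 2}\otimes N\bigr)=0$, and you explicitly leave that claim unproved, even conceding it might fail and offering only a vague fallback at $\Tor_4$. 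Since this is precisely where you say ``the real work'' lies, the theorem is not established. Moreover, the step producing $\beta_{3,4}^R(\kk)=1-\delta$ rests on an inclusion--exclusion for the dimension of a sum of \emph{three} subspaces, which is not valid in general; the correct bookkeeping gives $\beta_{3,4}^R(\kk)=1-\delta+D$ with $D\ge 0$ a distributivity defect, so your strategy would still work \emph{if} $\delta=0$ were proved, but the identity as stated is unjustified. For contrast, the paper avoids $R^!$ entirely: it exhibits the explicit quadratic first syzygy $\sum_{i}q_iu_i=0$ with $q_i=x_i^2-x_{i-1}x_{i-3}$ and $u_i=x_{i-1}x_{i-2}$, and shows by a colon-ideal computation $\bigl((q_1,\dots,q_{c-1},m_1,\dots,m_s):q_0\bigr)_1=(x_2,\dots,x_{c-3})_1$ that this syzygy does not lie in the module generated by the linear and Koszul syzygies of $I_F$, which obstructs Koszulness by \cite[2.8]{Koszul:ACI's}.

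The quadratic-generation step is also only asserted. Saying the spanning of $(I_F)_3$ and $(I_F)_4$ by the listed quadrics is a ``manageable finite check'' is exactly the content one must supply; in the paper this is a full paragraph of monomial manipulations modulo $J$ (degree $3$ spanned by the $c$ classes $x_ix_{i+1}x_{i+2}$, degree $4$ by a single class, degree $\ge 5$ zero), and it is where the hypothesis $c\ge 7$ enters (e.g.\ $x_{i-3}x_{i+1}\in J$ requires it) --- not, as you predict, only in the dual-lattice computation. Your exclusion of degree-$5$ generators via ``$F$ is not a monomial (equivalently $S_1\cdot(I_F)_4=S_5$)'' is not a correct equivalence ($F=y_0y_1y_2$ is a monomial whose apolar ideal is quadratic); the conclusion $\beta_{1,5}^S(R)=0$ is true, e.g.\ because nondegeneracy forces $\beta_{i,i}^S(R)=0$ for $i\ge 2$ and Gorenstein duality gives $\beta_{1,r+1}^S(R)=\beta_{c-1,c-1}^S(R)$, but in the paper it simply falls out of the proof that $J=I_F$, which handles all degrees at once by comparing Hilbert functions.
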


In the proof below, we view $\ZZ/c\ZZ$ as being totally ordered by $0 < 1 < \cdots < c-1$ and set $\abs{i} = \min\{i, c - i\}$ for each $i \in \ZZ/c\ZZ$.

\begin{proof} 
It is well-known that $R$ is Gorenstein with regularity 4.  For each
$i \in \ZZ/c\ZZ$, we note that \[x_iF = y_{i+1}y_{i+2}^2 +
  y_{i-1}y_{i+1}^2 + y_{i-2}y_{i-1}y_i.\]  In particular, all of these
cubics are linearly independent since, for example, only $x_iF$
contains the monomial $y_{i+1}y_{i+2}^2$, and so, no linear form
annihilates $F$ so that $I_F$ is nondegenerate.  Hence, the $h$-vector
of $R$ has the form \[h(R) = (1, c, h_2, c, 1), \]
where $h_2$ is $\binom{c+1}{2}$ minus the number of linearly independent quadrics in $I_F$.  Any quadratic monomial $x_ix_j$ with $\abs{i - j} \geq 3$ annihilates $F$, and a count shows there are $\frac{c(c-5)}{2}$ such monomials. The $c$ binomials of the form \[x_i^2-x_{i-1}x_{i-3}\] also annihilate $F$; combining these with the preceding square-free monomials gives a total of $\frac{c(c-3)}{2}$ linearly independent quadrics that annihilate $F$. Thus, to show that \[h(R) = (1,c,2c,c,1),\] it suffices to prove that there are no other quadrics in $I_F$. 

Let $J \subseteq I_F$ be the ideal generated by the $\frac{c(c-3)}{2}$ quadrics of the preceding paragraph.  We will show that $J = I_F$ so that $R$ is quadratic with the desired $h$-vector.  If $q \in I_F$ is a quadric, then after replacing $q$ with suitable linear combination with quadrics in $J$ we assume that $q$ has the form
\[
q = \sum_i a_{i, i+1}x_ix_{i+1} + \sum_i a_{i, i+2}x_ix_{i+2}
\] 
for some $a_{i,i+1}, a_{i,i+2} \in \kk$.  We claim that such a quadric $q$ is zero.  Indeed, since
\begin{align*}
x_ix_{i+1}F &= y_{i+2}^2+ y_{i-1}y_{i+1}  \\
x_ix_{i+2}F &= y_{i+1}y_{i+2}
\end{align*}
we see that $x_ix_{i+1}F$ is the only quadric containing $y_{i+2}^2$, and this forces $a_{i,i+1} = 0$ for all $i$ as $qF = 0$.   But then \[0 = qF = \sum a_{i,i+2}y_{i+1}y_{i+2}\] is a sum of independent quadratic monomials so that $a_{i,i+2} = 0$ for all $i$ as well.  Hence, $q = 0$ as claimed, and we see that $[I/J]_2 = 0$.

Among the degree 3 square-free monomials $x_ix_jx_\ell$ with $i < j < \ell$, we see that $x_ix_jx_\ell$ is zero modulo $J$ unless $\abs{i - \ell} \leq 2$ so that only the monomials $x_ix_{i+1}x_{i+2}$ could possibly be nonzero.  Similarly, any monomial of the form $x_i^2x_j$ is zero modulo $J$ unless $\abs{i - j} \leq 2$.  If $j = i, i +1, i+2$, then 
 \begin{equation} \label{degree:3:relations}
 x_i^2x_j \equiv x_{i-3}x_{i-1}x_j \equiv 0 \pmod{J}
 \end{equation}
 since 
\[x_{i-3}x_i, x_{i-3}x_{i+1}, x_{i-1}x_{i+2} \in J \mbox{ as }c \geq 7.\]  On the other hand, we have 
 \begin{align*} 
 x_i^2x_{i-1} &\equiv x_{i-3}x_{i-1}^2 \equiv x_{i-4}x_{i-3}x_{i-2} \\
 x_i^2x_{i-2} &\equiv x_{i-3}x_{i-2}x_{i-1}
 \end{align*}
 modulo $J$ so that the $c$ monomials of the form $x_ix_{i+1}x_{i+2}$ span $S/J$ in degree 3.  In degree 4, we note that \[x_jx_ix_{i+1}x_{i+2} \equiv 0 \pmod{J}\] unless $j = i, i + 1, i+2$, and moreover, we have 
\begin{align*}
x_i^2x_{i+1}x_{i+2} &\equiv x_{i-3}x_{i-1}x_{i+1}x_{i+2} \equiv 0 \\
x_ix_{i+1}^2x_{i+2} &\equiv x_{i-2}x_i^2x_{i+2} \equiv 0  \\
x_ix_{i+1}x_{i+2}^2 &\equiv x_{i-1}x_ix_{i+1}^2
\end{align*}
for all $i$, where the middle equivalence follows from \eqref{degree:3:relations}.  Thus, $S/J$ is spanned in degree 4 by the monomials $x_ix_{i+1}x_{i+2}^2$, which are all equivalent to one another.  In particular, every variable annihilates $x_0x_1x_2^2 \equiv x_3x_4x_5^2$ modulo $J$ so that every monomial of degree at least five is zero modulo $J$.  Since $S/J$ maps surjectively onto $R$ and $h(R) = (1, c, 2c, c, 1)$, this shows that $J = I_F$.

To prove that $R$ is not Koszul, it suffices to show that there is a quadratic first syzygy of $I$ which is not in the module $Z$ generated by the linear syzygies and the Koszul syzygies of $I$ (see for example \cite[2.8]{Koszul:ACI's}). For each $i \in \ZZ/c\ZZ$, let \[q_i = x_i^2 - x_{i-1} x_{i-3}\] be the binomial quadrics in $I_F$, 
and let $u_i =x_{i-1} x_{i-2}$. Then a computation shows that 
\[
\sum_{i=0}^{c-1} q_i u_i = 0.
\]
Call this syzygy $u = (u_0, \dots, u_{c-1})$; we claim that $u \notin Z$. 

To see this, let $m_1, \dots, m_s$ denote the quadratic monomials in $I_F$, and let \[L = (q_1, \dots, q_{c-1}, m_1, \dots, m_s).\]  We further claim that \[(L : q_0)_1 = (x_2, \ldots, x_{c-3})_1. \] Indeed, for $3 \leq i \leq c -4$, we have $x_iq_0 \in (m_1, \dots, m_s)$, while
\begin{align*}  
x_2q_0 &= -x_0q_3 + x_3(x_0x_3) - x_{c-3}(x_2x_{c-1}) \\
x_{c-3}q_0 &= x_0(x_0x_{c-3}) -x_{c-1}q_{c-3} + x_{c-6}(x_{c-1}x_{c-4})
\end{align*} 
are both elements of $L$. If $\ell \in (L : q_0)_1$ is any other
linear form, we may assume that \[\ell = a x_0 + b x_1 + d x_{c-2} + e  x_{c-1} \mbox{ for some }a, b, d, e \in \kk.\]  Since none of the generators of $L$ contain a monomial of the form $x^2_0$, $x_{c-1}x_{c-3}$, or $x_ix_{i+1}$ in their supports, we see that:
\begin{itemize}
\item $a = 0$ as $x_0^3$ is not in the support of any polynomial in $L$.  
\item $b = 0$ as $x_1 x_0^2$ is not in the support of any polynomial in $L$.
\item $d = 0$ as $x_{c-1} x_{c-2} x_{c-3}$ is not in the support of any polynomial in $L$.
\item $e = 0$ as $x_{c-1} x_0^2$ is not in the support of any polynomial in $L$.
\end{itemize}
Hence, $\ell = 0$ so that $(L : q_0)_1 = (x_2, \dots, x_{c-1})_1$ as claimed.

As a consequence of the preceding paragraph, we see that the first coordinate of any linear syzygy on $q_0, \dots, q_{c-1}, m_1, \dots, m_s$ must belong to $(x_2, \dots, x_{c-1})$.  If $u \in Z$, then its first coordinate $u_0 = x_{c-1}x_{c-2}$ must be a linear combination of the first coordinates of the linear syzygies and Koszul syzygies of $I_F$ so that \[u_0 \in (x_2, \dots, x_{c-3}, q_1, \dots, q_{c-1}) = (x_2, \dots, x_{c-3}, x_1x_{c-1}, x_{c-2}^2, x_{c-1}^2, q_1). \] But this is impossible since $x_{c-1}x_{c-2}$ does not appear in any quadric in this ideal.  This shows that $u$ is a minimal quadratic syzygy which is not in the submodule generated by the linear and Koszul syzygies. Therefore $S/I_F$ is quadratic, Gorenstein, and not Koszul, with regularity four and codimension $c$.
\end{proof}

\begin{example} \label{generalizing:Matsuda}
The $c = 7$ case of Theorem~\ref{r=4:c>=7} yields an ideal with
generators
\[
\begin{array}{ccc}
y_0y_3 & &y_3y_5-y_6^2\\ 
  y_0y_4& &y_2y_4-y_5^2\\
y_1y_4& & y_1y_3-y_4^2\\
y_1y_5& &y_0y_2-y_3^2 \\ 
y_2y_6& &y_1y_6-y_2^2\\
y_2y_5& &y_0y_5-y_1^2\\
y_3y_6& &y_4y_6 -y_0^2
\end{array}
\] 
This recovers the Artinian reduction of the toric ring in Example \ref{Matsuda's:example} (see the proof of \cite[1.3]{Matsuda}); in this sense, the above result greatly extends Matsuda's example.
\end{example}

\begin{example}\label{r=4:c=6}
When $\kk = \QQ$, we can also find an example of a non-Koszul quadratic Gorenstein ring with regularity $r = 4$ and codimension $c = 6$ by slightly modifying the construction of the preceding theorem.  As in Theorem~\ref{r=4:c>=7}, we start
with the polynomial 
\[
F = y_0y_1y_2^2+y_1y_2y_3^2+y_2y_3y_4^2+y_3y_4y_5^2+y_4y_5y_0^2+y_5y_0y_1^2
\]
The corresponding ideal $I_F$ is generated by six quadrics and two
cubics. It is possible to eliminate the cubics by modifying the input
polynomial $F$ to
\[
G=F+y_0y_5y_4^2+y_0y_5^3
\]
The ideal $I_G$ is generated by the nine quadrics
\[
\begin{array}{c}
y_2y_5\\
y_1y_4\\
y_0y_3 \\
y_3^2-y_0y_2 \\ 
y_0^2-y_3y_5    \\
y_2^2-y_1y_5\\ 
y_1^2+y_2y_4-y_5^2  \\ 
y_1y_3-y_2y_4-y_4^2+y_5^2 \\ 
y_0y_4+y_2y_4-y_3y_5-y_5^2
\end{array}
\]
The ring $R = S/I_G$ has Betti table
\vspace{1 ex}
\begin{equation}
\scalebox{0.9}{
\begin{tabular}{r|ccccccc}
 & 0 & 1 & 2 & 3 & 4 & 5 & 6   \\
\hline  
0 & 1 & -- & -- & -- & -- & -- & --  
\\ 
1 & -- & 9 & 4 & -- & -- & -- & -- 
\\ 
2 & -- & -- & 40 & 72 & 40 & -- & -- 
\\ 
3 & -- & -- & -- & -- & 4 & 9 & --  
\\ 
4 & -- & -- & -- & -- & -- & -- & 1  
\end{tabular}
}
\vspace{1 ex}
\end{equation} 
The $h$-vector of $R$ is $(1, 6, 12, 6, 1)$, and since $\beta_{2,4}^S(R) = 40 > \binom{9}{2}$, $R$ cannot be Koszul by \cite[3.4]{Koszul:algebras:defined:by:3:quadrics}.
\end{example}

For any non-Koszul quadratic Gorenstein ring $R = S/I$, let $R' = R\otimes_\kk \kk[z]/(z^2)$, where $z$ is a new variable. Then $R'$ is also non-Koszul, quadratic and Gorenstein, and has a mapping cone resolution over the polynomial ring $S[z]$ with 
\[
\reg R' = \reg R +1 \mbox{ and } \codim R' = \codim R + 1 
\]
(see for example \cite[4.9]{QGRKP1} for further details).  Applying this to the rings appearing in Theorem \ref{r=4:c>=7} yields non-Koszul quadratic Gorenstein rings when $r=5$ and $c \in \{9, 10\}$, and applying it to Example \ref{r=4:c=6} yields non-Koszul quadratic Gorenstein rings when $c=r+2$. Combined with Theorem \ref{c=r+1:is:Koszul}, we obtain the improvement from Figure \ref{table:of:when:quadratic:Gorenstein:rings:are:Koszul} to Figure \ref{improved:table:of:when:quadratic:Gorenstein:rings:are:Koszul} for quadratic Gorenstein rings over a field of characteristic zero.  

In recent work \cite{jason}, McCullough-Seceleanu use the idealization construction to produce a quadratic Gorenstein algebra with $r=3$ and $c=8$ which is not Koszul. 
We are working to apply the techniques of Caviglia \cite{Caviglia} to understand the two remaining unknown cases $r=3$ and $c \in \{6,7\}$ in Figure \ref{improved:table:of:when:quadratic:Gorenstein:rings:are:Koszul}. 

\begin{rmk}
Although our strongest results hold only in characteristic zero, most of our results in this paper and \cite{QGRKP1} hold in arbitrary characteristic so that Figure \ref{improved:table:of:when:quadratic:Gorenstein:rings:are:Koszul} remains mostly intact in prime characteristic.  The notable exceptions are that the cases where $6 \leq c = r + 2$ and where $r = 3$ and \[c \in \{10, 11, 14, 15, 18, 19, 24\}\] remain unknown.
\end{rmk}

\begin{ack}
{\tt Macaulay2} computations were essential to our work. The first author thanks Paolo Mantero for informing him of the references for Propositions \ref{h-vectors:and:linkage} and \ref{regularity:bound}.  We thank BIRS-CMO, where we learned of Matsuda's result, and two anonymous referees for a careful reading and comments.
\end{ack}

\end{spacing}

\end{document}